\theoremstyle{plain} % ,,Styl'' twierdzeń zwykły (wytłuszczony tytuł,
\newtheorem{tw}{Theorem}[section]	%section oznacza numerowanie w ramach rozdziałów
\theoremstyle{definition} % Przełączamy styl na ,,definicyjny'': tytuł
\newtheorem{example}{Example}[section]
\newtheorem{remark}{Remark}
\theoremstyle{remark} % styl ,,uwag'': tytuł kursywą, treść pismem
\newcommand{\sP}{\mathsf{P}}
\newcommand\cF{{\mathcal F}}
\newcommand\cM{{\mathcal M}}
\newcommand\md{{\,\mathrm{d}}}
\newcommand{\rM}{\mathrm{M}}
\renewcommand\c{\circ}
\newcommand\lo{{\,\lozenge\,}} %zgadza się
\newcommand\loo{{\lozenge}}
\newcommand\vtr{\vartriangle}
\newcommand\intl{\int\limits}
\newcommand\nint{(N)\int}
\newcommand\nintl{(N)\int\limits}
\newcommand\sint{(S)\int}
\newcommand\sintl{(S)\int\limits}
\newcommand\cint{\int}
\newcommand\cintl{\int\limits}
\renewcommand\ge{\geqslant}
\renewcommand\le{\leqslant}
\title{On Carlson's inequality for Sugeno and Choquet integrals}
\author{Michał Boczek\footnote{Corresponding author. E-mail adress: 800401@edu.p.lodz.pl;}, Marek Kaluszka
%\footnote{Corresponding author. E-mail adress: kaluszka@p.lodz.pl; tel.: +48 42 6313859; fax.: +48 42 6363114.}
\\
{\emph{
\small{Institute of Mathematics, Lodz University of Technology, 90-924 Lodz, Poland}}}}
\date{}
\begin{document}
\maketitle

%\begin{center}
%Michał Boczek\footnote{Corresponding author. E-mail adress: 800401@edu.p.lodz.pl;}, Marek Kaluszka \\
%{\emph{\small{Institute of Mathematics, Lodz University of Technology, 90-924 Lodz, Poland}}}
%\end{center}

\begin{abstract}
We present
a
%the
Carlson type inequality for the generalized
Sugeno
integral and
%in the case of functions belonging to
a~much wider
class of functions than the comonotone functions.
We also provide three  Carlson type inequalities for the Choquet integral.
Our inequalities generalize many known results.
%The inequalities extend most of the results known in the literature.
\end{abstract}

{\it Keywords: }{Choquet integral; Sugeno integral; Capacity; Semicopula; Carlson inequality.}

\section{Introduction}
The pioneering concept of the fuzzy integral was introduced by Sugeno $\cite{sugeno1}$
as a~tool for modelling non-deterministic problems. Theoretical investigations of the integral
and its generalizations have been pursued by many researchers.
Wang and Klir $\cite{wang}$ presented an excellent general overview on fuzzy integration theory. On the other hand,
fuzzy integrals have also been successfully applied to various fields
(see, e.g., $\cite{Hu,Narukawa}$).

The study of inequalities for Sugeno integral was initiated by Rom\'an-Flores et al. $\cite{Flores5}.$
Since then, the fuzzy integral counterparts of several classical inequalities, including Chebyshev's,
Jensen's, Minkowski's and H\"older's inequalities, are given by Flores-Franuli\v c and Rom\'an-Flores $\cite{Flores4}$,
Agahi et al. $\cite{agahi9},$  L. Wu et al. $\cite{wu1}$ and
%many
others.
Furthermore many researchers started to study inequalities for the seminormed Sugeno integral $\cite{agahi3,boczek1,boczek2,ouyang1}.$

The  Carlson  inequality for the Lebesgue
integral is of the form
\begin{align}\label{c1}
\intl_0^\infty f(x)\md x\le \sqrt{\pi}\bigg(\intl_0^\infty f^2(x)\md x\bigg)^{\tfrac{1}{4}}\bigg(\intl_0^\infty x^2f^2(x)\md x\bigg)^{\tfrac{1}{4}},
\end{align}
where $f$ is any non-negative, measurable function such that the integrals
on the right-hand side converge.
The equality in $\eqref{c1}$ is attained iff $f(x)=\frac{\alpha}{\beta+x^2}$ for some constants $\alpha\ge 0$, $\beta>0.$
The modified versions of the Carlson inequality can
be
found in $\cite{barza}$ and $\cite{mit}.$
%As has been shown in [] and [], the inequality $\eqref{c1}$ does not hold
%for the Sugeno  and  the Choquet
%integrals.

The purpose of this paper is to study the Carlson inequality for
the generalized Sugeno as well the Choquet integrals.
In Section $2,$ we provide inequalities for the generalized Sugeno integral.
The results are obtained for a~rich class of functions, including the comonotone functions as a~special case.
In %the next
Section $3$ we present
the corresponding
results for the Choquet integral.

\section{Carlson's type inequalities for Sugeno integral}

Let $(X,\cF)$  be a~measurable space and $\mu\colon \cF \to Y$ be a~monotone measure,
i.e., $\mu(\emptyset)=0$, $\mu (X)>0$ and $\mu(A)\le\mu(B)$ whenever $A\subset B.$ Throughout the paper $Y=[0,1]$ or $Y=[0,\infty].$
Suppose  $\circ\colon Y\times Y\to Y$ is a~non-decreasing operator, i.e. $a\circ c\ge b\circ d$ for $a\ge b$ and $c\ge d.$
An %important
example of non-decreasing
operators
is a~$t$-{\it seminorm},  also called
a~{\it semicopula}   $\cite{durante1, ouyang1}.$
There are three important $t$-seminorms: $\rM,$ $\Pi$ and $\c_L,$ where $\rM(a,b)=a\wedge b,$ $\Pi(a,b)=ab$ and $\c_L(a,b)=(a+b-1)\vee 0$ usually called the {\it Łukasiewicz t-norm}  $\cite{klement2}.$ Hereafter,
$a\vee b=\max(a,b)$ and $a\wedge b=\min(a,b).$

For a~measurable function $h\colon X\to Y,$ 
we  define the {\it generalized Sugeno integral} 
of $h$ on a~set $A\in \cF$ with respect to $\mu$ and
a~non-decreasing operator $\c\colon Y\times Y \to Y$ as
\begin{align}\label{p1}
\intl_A h\c \mu=\sup_{\alpha\in Y} \left\{\alpha\c \mu\big(A\cap\lbrace h\ge \alpha \rbrace\big)\right\},
\end{align}
where $\left\{h\ge a\right\}$ stands for $\left\{x\in X\colon h(x)\ge a\right\}$.
For $\c=\rM$, we get the {\it Sugeno integral} $\cite{sugeno1}.$ If  $\c=\Pi,$
then $\eqref{p1}$ is called the {\it Shilkret integral}\; $\cite{shilkret}.$
We denote the Sugeno and the Shilkret integral as $\sint_A f\md\mu$ and $\nint_A f\md\mu$, respectively. Moreover,  we obtain
the
{\it seminormed fuzzy integral}  if $\c$ is a~semicopula $\cite{suarez}.$

Let $f,g\colon X\to Y$ be measurable functions and $A,B\in\cF$. The  functions $f_|{_A}$ and $g_|{_B}$  are {\it positively dependent} with respect to $\mu$ and an operator  $\vtr\colon Y\times Y\to Y$ if for any $a,b\in Y$
\begin{align}\label{p2}
\mu\Big(\left\{f_|{_A}\ge a\right\}\cap\left\{g_|{_B}\ge b\right\}\Big)\ge\mu\Big(\left\{f_|{_A}\ge a\right\}\Big)\vtr \mu\Big(\left\{g_|{_B}\ge b\right\}\Big),
\end{align}
where $h_|{_C}$ denotes the restriction of the function $h\colon X\to Y$ to a~set $C\subset X.$ Obviously, $\left\{h_|{_C}\ge a\right\}=\left\{x\in C\colon h(x)\ge a\right\}=C\cap\left\{h\ge a\right\}.$
Taking
$a\vartriangle b=a\wedge b$ and $a\vartriangle b=ab,$ we recover two important examples of positively dependent functions, namely comonotone functions and independent random variables. Recall that
$f$ and $g$ are comonotone if $(f(x)-f(y))(g(x)-g(y))\ge 0$ for all $x,y\in X$. More examples of positively dependent functions can be found in $\cite{boczek1}.$

Suppose  $\star,\Box\colon Y\times Y\to Y$ are non-decreasing operators. Let $\loo\colon Y\times  Y \to Y$ be a~non-decreasing and left-continuous operator, i.e. $\lim_{n\to\infty} (x_n \c y_n)=x\c y$ for all $x_n \nearrow x$ and $y_n \nearrow y,$ where $a_n\nearrow a$ means that $\lim_{n\to\infty} a_n=a$ and $a_n<a_{n+1}$ for all $n.$ Assume $\vtr\colon Y\times Y\to Y$ is an~arbitrary operator and $f,f_i,g\colon X\to Y,$ $i=1,2,3,$ are measurable functions.

We recall two inequalities for generalized Sugeno integral.

\begin{tw}[$\cite{boczek2}$] For $s\ge 1$ and $A\in\cF,$
the Jensen type inequality
\begin{align}\label{p3}
\intl_A f^s\c \mu\ge \bigg(\intl_A f\c \mu\bigg)^s
\end{align}
holds if $a^s\c b\ge (a\c b)^s$ for all $a,b\in Y.$
\end{tw}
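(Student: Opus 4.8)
The plan is to work directly from the definition \eqref{p1} and to reduce the left-hand side to a supremum having exactly the same shape as the right-hand side, so that the pointwise hypothesis $a^s\c b\ge(a\c b)^s$ can be invoked level by level. The first step is to rewrite the level sets of $f^s$. Since $f$ takes values in $Y$ and $s\ge1$, the map $t\mapsto t^s$ is a strictly increasing continuous bijection of $Y$ onto itself (for both $Y=[0,1]$ and $Y=[0,\infty]$), so $\left\{f^s\ge\alpha\right\}=\left\{f\ge\alpha^{1/s}\right\}$ for every $\alpha\in Y$. Performing the bijective change of the supremum variable $\alpha=\beta^s$ then gives
\[
\intl_A f^s\c\mu=\sup_{\beta\in Y}\Big\{\beta^s\c\mu\big(A\cap\left\{f\ge\beta\right\}\big)\Big\}.
\]

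Next I would apply the assumption with $a=\beta$ and $b=\mu\big(A\cap\left\{f\ge\beta\right\}\big)$ inside the supremum, which yields $\beta^s\c\mu(A\cap\{f\ge\beta\})\ge\big(\beta\c\mu(A\cap\{f\ge\beta\})\big)^s$ for every $\beta\in Y$, and hence
\[
\intl_A f^s\c\mu\ge\sup_{\beta\in Y}\Big\{\big(\beta\c\mu(A\cap\left\{f\ge\beta\right\})\big)^s\Big\}.
\]

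To finish, I would use once more that $t\mapsto t^s$ is increasing and continuous on $Y$, which lets the $s$-th power commute with the supremum; thus the right-hand side above equals $\big(\sup_{\beta\in Y}\{\beta\c\mu(A\cap\{f\ge\beta\})\}\big)^s$, and by \eqref{p1} this is precisely $\big(\intl_A f\c\mu\big)^s$, giving the claimed inequality.

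I expect the only delicate points to be the two places where monotonicity of $t\mapsto t^s$ is used: the change of variable $\alpha=\beta^s$ must be a genuine order-isomorphism of $Y$ so that no values of the supremum are lost or added, and the commutation $\sup_\beta\phi(g(\beta))=\phi(\sup_\beta g(\beta))$ requires $\phi=(\cdot)^s$ to be increasing and continuous. Both hold for $s\ge1$ on $Y=[0,1]$ and $Y=[0,\infty]$, and once they are checked the argument is a pure level-set manipulation that uses no structural property of $\c$ beyond the hypothesised inequality.
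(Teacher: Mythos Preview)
Your argument is correct: the change of variable $\alpha=\beta^s$ (a bijection of $Y$ for $s\ge 1$), the pointwise use of the hypothesis $a^s\c b\ge(a\c b)^s$, and the commutation of the increasing continuous map $t\mapsto t^s$ with the supremum are exactly the three ingredients needed, and you have identified and justified each of them.

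Note, however, that the paper does not contain a proof of this statement to compare against: Theorem~2.1 is quoted verbatim from the authors' earlier work \cite{boczek2} and is used here only as a tool in the proof of Theorem~\ref{tw1}. Your argument is the standard direct one and is essentially what appears in that reference.
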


\begin{remark}\label{rem1} If $\circ=\wedge$, then $\eqref{p3}$ is satisfied provided
$\sint_A f\md\mu\le 1$ (see $\cite{xu}$ and $\cite{boczek1},$ Theorem $3.1$).
\end{remark}
%The following assumption will be used
%\begin{itemize}
%\item[{\bf A1}] for arbitrary $c,d\in\mu(\cF)$ there exist $A,B \in \cF$ such that $\mu(A)=c,$ $\mu(B)=d$ and $\mu(A\cap B)=\mu(A)\vtr \mu(B).$
%\end{itemize}

\begin{tw}[$\cite{boczek1}$]
The Chebyshev type inequality
%is
of the form
\begin{align}\label{p4}
\intl_{A\cap B} \big( f_1\,\Box\, f_2\big) \c \mu\ge \bigg(\intl_A f_1\c \mu\bigg) \lo \bigg(\intl_B f_2\c \mu\bigg)
\end{align}
%is satisfied
holds for all positively dependent functions
${f_1}_|{_A},$ ${f_2}_|{_B}$ and $A,B\in \cF$  if
$\big(a\,\Box\, b\big)\c \big(c\vtr d\big)\ge \big(a\c c\big) \lo \big(b\c d\big)$
for all $a,b,c,d\in Y.$
\end{tw}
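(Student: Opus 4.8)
The plan is to reduce both sides of \eqref{p4} to suprema over threshold pairs and then exploit the pointwise algebraic hypothesis level by level. First I would fix arbitrary $a,b\in Y$ and note that on the set where $f_1(x)\ge a$ and $f_2(x)\ge b$ the monotonicity of $\Box$ forces $f_1(x)\Box f_2(x)\ge a\Box b$. Restricting everything to $A\cap B$, this gives the inclusion
\[
\{{f_1}_|{_A}\ge a\}\cap\{{f_2}_|{_B}\ge b\}\subseteq (A\cap B)\cap\{f_1\Box f_2\ge a\Box b\}.
\]
Applying the monotonicity of $\mu$ and then the positive dependence inequality \eqref{p2}, I obtain
\[
\mu\big((A\cap B)\cap\{f_1\Box f_2\ge a\Box b\}\big)\ge \mu\big(\{{f_1}_|{_A}\ge a\}\big)\vtr \mu\big(\{{f_2}_|{_B}\ge b\}\big).
\]

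Writing $p=\mu(\{{f_1}_|{_A}\ge a\})$ and $q=\mu(\{{f_2}_|{_B}\ge b\})$, the next step is to feed this into the definition \eqref{p1} of the left–hand integral. Choosing $\alpha=a\Box b$ in the defining supremum and using that $\c$ is non-decreasing in its second argument together with the measure bound just derived, I get
\[
\intl_{A\cap B}(f_1\Box f_2)\c\mu\ge (a\Box b)\c\mu\big((A\cap B)\cap\{f_1\Box f_2\ge a\Box b\}\big)\ge (a\Box b)\c(p\vtr q).
\]
The hypothesis $(a\Box b)\c(c\vtr d)\ge (a\c c)\lo(b\c d)$, specialised to $c=p$ and $d=q$, converts the right side into $(a\c p)\lo(b\c q)$. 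Since $a\c p=a\c\mu(\{{f_1}_|{_A}\ge a\})$ and $b\c q=b\c\mu(\{{f_2}_|{_B}\ge b\})$ are exactly the terms defining the two integrals on the right of \eqref{p4}, this establishes, for every $a,b\in Y$,
\[
\intl_{A\cap B}(f_1\Box f_2)\c\mu\ge \Big(a\c\mu(\{{f_1}_|{_A}\ge a\})\Big)\lo\Big(b\c\mu(\{{f_2}_|{_B}\ge b\})\Big).
\]

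It then remains to take the supremum over $a$ and $b$ and recognise the right-hand side of \eqref{p4}. Writing $u(a)=a\c\mu(\{{f_1}_|{_A}\ge a\})$ and $v(b)=b\c\mu(\{{f_2}_|{_B}\ge b\})$, so that $\sup_a u(a)=\intl_A f_1\c\mu$ and $\sup_b v(b)=\intl_B f_2\c\mu$, what I need is the identity
\[
\sup_{a,b}\big(u(a)\lo v(b)\big)=\Big(\sup_a u(a)\Big)\lo\Big(\sup_b v(b)\Big).
\]
The inequality $\le$ is immediate from monotonicity of $\lo$; the reverse inequality is the \emph{crux} and is precisely where left-continuity of $\lo$ enters. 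I would choose $a_n,b_n$ with $u(a_n)\nearrow\sup_a u$ and $v(b_n)\nearrow\sup_b v$ and pass to the limit through left-continuity, while observing that whenever a supremum is attained the corresponding term already lies in the family $\{u(a)\lo v(b)\}$.

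I expect this last passage to the suprema to be the main obstacle: one must treat separately the cases where the two suprema are or are not attained, and in an attained case reduce to the one-sided (marginal) left-continuity of $\lo$. The latter follows from the stated joint left-continuity together with monotonicity, since for fixed $x$ and $y_n\nearrow y$ one can sandwich $x\c y_n$ between $x_n\c y_n$ (with $x_n\nearrow x$) and $x\c y$ and let $n\to\infty$. Everything preceding this step is a routine chain of monotonicity estimates, so the proof essentially hinges on the hypothesis $(a\Box b)\c(c\vtr d)\ge(a\c c)\lo(b\c d)$ and the left-continuity of $\lo$.
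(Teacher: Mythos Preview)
The paper does not actually prove this theorem: it is quoted from \cite{boczek1} and used as a tool for Theorem~\ref{tw1}, with no argument supplied here. So there is no in-paper proof to compare against.

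That said, your argument is the natural one and is essentially correct. The chain
\[
\intl_{A\cap B}(f_1\Box f_2)\c\mu \;\ge\; (a\Box b)\c\mu\big((A\cap B)\cap\{f_1\Box f_2\ge a\Box b\}\big)\;\ge\;(a\Box b)\c(p\vtr q)\;\ge\;(a\c p)\lo(b\c q)
\]
uses, in order, the definition \eqref{p1}, monotonicity of $\c$ combined with the set inclusion and the positive-dependence bound \eqref{p2}, and finally the algebraic hypothesis; each step is sound. Your identification of the passage to the double supremum as the only nontrivial point is accurate, and your treatment of it---splitting into attained versus non-attained suprema and deriving the one-variable left-continuity of $\lo$ from the joint version via the sandwich $x_n\lo y_n\le x\lo y_n\le x\lo y$---is exactly how this is handled in the source. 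The only cosmetic point is that the paper's definition of $a_n\nearrow a$ requires strict increase, so in the mixed case (one supremum attained, one not) you do need the marginal left-continuity argument you sketch; you have already anticipated this.
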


Now we are ready to derive a~Carlson-type inequality for the generalized Sugeno integral.

\begin{tw}\label{tw1}
Suppose  $p,q\ge 1$ and $r,s>0.$ Then for arbitrary pairs of positively dependent functions
%$f_{1|{A}},$ $f_{2|{B}}$ and  $f_{1|{A}},$ $f_{3|{B}}$,
$f_{|{A}},$ $g_{|{B}}$ and  $f_{|{A}},$ $h_{|{B}}$,
the following inequality
\begin{align}\label{nowy1}
\bigg(\Big(\intl_A f\c \mu\Big) \lo \Big(\intl_B g\c\mu\Big)\bigg)^r\star& \bigg(\Big(\intl_A f\c \mu\Big) \lo \Big(\intl_B h\c\mu\Big)\bigg)^s\\&\le \bigg(\intl_{A\cap B} \big(f\,\Box\, g\big)^p\c \mu \bigg)^{\tfrac{r}{p}}\star\bigg(\intl_{A\cap B} \big(f\,\Box\, h\big)^q\c \mu \bigg)^{\tfrac{s}{q}}\nonumber
\end{align}
is satisfied if for all $a,b,c,d\in Y$ and $s>1$,
\begin{align}\label{warunki}
a^s\c b\ge \big(a\c b\big)^s,\quad
\big(a\,\Box\, b)\c \big(c\vtr d\big)
\ge \big(a\c c\big) \lo \big(b\c d\big).
\end{align}
\end{tw}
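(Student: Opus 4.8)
The plan is to obtain \eqref{nowy1} by chaining the Chebyshev inequality \eqref{p4} with the Jensen inequality \eqref{p3}, and then closing with the monotonicity of the power maps and of $\star$. It is convenient to abbreviate $P=\big(\intl_A f\c\mu\big)\lo\big(\intl_B g\c\mu\big)$ and $Q=\big(\intl_A f\c\mu\big)\lo\big(\intl_B h\c\mu\big)$, so that the left-hand side of \eqref{nowy1} is exactly $P^r\star Q^s$, while the right-hand side has the form $X^{r/p}\star Y^{s/q}$ with $X=\intl_{A\cap B}(f\,\Box\,g)^p\c\mu$ and $Y=\intl_{A\cap B}(f\,\Box\,h)^q\c\mu$.

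First I would bound the two single-power integrals from below using \eqref{p4}. Since $f_{|A},g_{|B}$ are positively dependent with respect to $\mu$ and $\vtr$, and the second inequality in \eqref{warunki} holds, \eqref{p4} gives $\intl_{A\cap B}(f\,\Box\,g)\c\mu\ge P$; applying the same inequality to the pair $f_{|A},h_{|B}$ yields $\intl_{A\cap B}(f\,\Box\,h)\c\mu\ge Q$.

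Next I would lift these to the $p$-th and $q$-th powers. The composite $f\,\Box\,g$ is measurable because $\Box$ is non-decreasing, and the first inequality in \eqref{warunki} holds for every exponent greater than $1$; hence \eqref{p3} applied to $f\,\Box\,g$ with exponent $p$ gives $X\ge\big(\intl_{A\cap B}(f\,\Box\,g)\c\mu\big)^p\ge P^p$, and analogously $Y\ge Q^q$. Taking $p$-th and $q$-th roots turns these into $X^{1/p}\ge P$ and $Y^{1/q}\ge Q$. Because $t\mapsto t^r$ and $t\mapsto t^s$ are non-decreasing on $Y$ for $r,s>0$, I may raise the first to the power $r$ and the second to the power $s$ to get $X^{r/p}\ge P^r$ and $Y^{s/q}\ge Q^s$; a single use of the monotonicity of $\star$ then combines them into $X^{r/p}\star Y^{s/q}\ge P^r\star Q^s$, which is precisely \eqref{nowy1}.

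The argument is essentially a monotone chaining, so the delicate part is bookkeeping rather than analysis: I must make sure each appeal to \eqref{p3} and \eqref{p4} really satisfies its hypotheses — measurability of the composite integrands, positive dependence of the correct pair, and the two algebraic conditions \eqref{warunki} — and, since \eqref{p3} is stated only for exponents at least $1$ under a condition assumed for exponents strictly above $1$, I would treat the degenerate cases $p=1$ and $q=1$ separately, where the Jensen step collapses to an identity. Granting these checks, no further restriction on $r,s$ beyond positivity is needed.
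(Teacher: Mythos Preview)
Your proof is correct and follows essentially the same route as the paper: both arguments combine the Chebyshev-type inequality \eqref{p4} with the Jensen-type inequality \eqref{p3} and close via the monotonicity of the power maps and of $\star$; the only cosmetic difference is that the paper applies \eqref{p3} before \eqref{p4}, whereas you reverse the order. Your extra remarks on measurability of $f\,\Box\,g$ and on the degenerate cases $p=1$, $q=1$ are reasonable bookkeeping that the paper leaves implicit.
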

\begin{proof} Observe that all integrals in $\eqref{nowy1}$
%belong to
are elements of $Y$.
From the Jensen  inequality $\eqref{p3},$  it follows that
\begin{align}
\intl_{A\cap B} \big(f\,\Box \,g\big) \c \mu&\le \bigg(\intl_{A\cap B} \big(f\,\Box\, g\big)^p\c \mu \bigg)^{\tfrac{1}{p}}, \label{n1}\\
\intl_{A\cap B} \big(f\,\Box\, h\big)\c\mu &\le \bigg(\intl_{A\cap B} \big(f\,\Box\, h\big)^q\c \mu\bigg)^{\tfrac{1}{q}}.\label{n2}
\end{align}
The operator $\star$ is  non-decreasing, so by $\eqref{n1}$ and $\eqref{n2},$
\begin{align}\label{n3}
\bigg(\intl_{A\cap B} \big(f\,\Box\, g\big) \c \mu\bigg)^r&\star \bigg(\intl_{A\cap B} \big(f\,\Box\, h\big)\c\mu \bigg)^s\nonumber\\&\le \bigg(\intl_{A\cap B} \big(f\,\Box\, g\big)^p\c \mu \bigg)^{\tfrac{r}{p}}\star\bigg(\intl_{A\cap B} \big(f_1\,\Box\, h\big)^q\c \mu \bigg)^{\tfrac{s}{q}}.
\end{align}
From $\eqref{p4}$ we get
\begin{align}
\intl_{A\cap B} \big(f\,\Box\, \psi\big) \c \mu&\ge \bigg(\intl_A f\c \mu\bigg) \loo \bigg(\intl_B \psi \c \mu\bigg)\;\;\hbox{for}\; \psi=g,h.\label{n4}%\\
%\intl_{A\cap B} \big(f_1\,\Box\, f_3\big) \c \mu&\ge \bigg(\intl_A f_1\c \mu\bigg) \loo \bigg(\intl_B f_3\c \mu\bigg).\label{n5}
\end{align}
To complete the proof, it is enough to apply $\eqref{n4}$ to $\eqref{n3}.$
\end{proof}

Theorem $\ref{tw1}$ extends  all known
(obtained by different methods)
%in the literature
Carlson type inequalities for the Sugeno integral.
%obtained by different methods.
%In fact, set $a\vartriangle b=a\wedge b$ and $A=B$.
%Indeed, put
In order to see this, we first put
$A=B,$ $\vtr=\c=\wedge$ and $\Box=\star=\lo=\cdot$ in Theorem $\ref{tw1}.$
Putting further $g=1,$ $h=x,$  $p=q=2,$ $r=s=1$ and $A=[0,1]$ yields
%To get
the~result of Caballero et al. $\cite{caballero4}.$
%,$ put  also  $g=1,$ $h=x,$  $p=q=2,$ $r=s=1$ and $A=[0,1].$
If $\mu$ is the Lebesgue measure then
\begin{align*}
\sintl_A f\md\mu \le \sqrt{2} \bigg(\sintl_A  f\md\mu\bigg)^{\tfrac{1}{4}} \bigg(\sintl_A x^2f^2\md\mu\bigg)^{\tfrac{1}{4}},
\end{align*}
since $\sint_{[0,1]} x\md\mu=0.5$ and  if $f$ and $g$ are comonotone, then  $f_{|{A}}$ and
$g_{|{A}}$ are positively dependent with respect to the operator $\wedge$ (see Example $2.1$ in $\cite{boczek1}$).

Setting $r=s=1$, we obtain
Theorem $3.1$ of  Xu and  Ouyang $\cite{xu}$
\begin{align*}
\sintl_A f\md\mu \le\frac{1}{\sqrt{C}} \bigg(\sintl_A f^p g^p \md\mu\bigg)^{\tfrac{1}{2p}}\bigg(\sintl_A f^q h^q \md\mu\bigg)^{\tfrac{1}{2q}},
\end{align*}
where $C=\Big(\sint_A g\md\mu\Big)\Big(\sint_A h\md\mu\Big)$ (see Remark
$\ref{rem1}).$
Taking  $r=p/(p+q)$ and $s=1-r$, we get  Theorem $2.7$ from $\cite{xwang}$
\begin{align*}
\sintl_A f\md\mu\le \frac{1}{K} \bigg(\sintl_A f^p g^p\md\mu\bigg)^{\tfrac{1}{p+q}}\bigg(\sintl_A f^q h^q\md\mu\bigg)^{\tfrac{1}{p+q}},
\end{align*}
where $$K=\bigg(\sintl_A g\md\mu\bigg)^{\tfrac{p}{p+q}}\bigg(\sintl_A h\md\mu\bigg)^{\tfrac{q}{p+q}}.$$
Combining the above results with
%others
other inequalities for
comonotone functions one can also derive
(similarly as in $\cite{daraby}$)
some related Carlson type inequalities for the Sugeno integral.
% in a~similar manner as in $\cite{daraby}.$

From Theorem $\ref{tw1}$ one can obtain many
%others
other Carlson type inequalities since the conditions $\eqref{warunki}$ are fulfilled by
%a lot of
many systems of operators. Examples are:
\begin{enumerate}%[leftmargin=0.8cm,rightmargin=0cm]
\item  $\vtr=\wedge$ and $\Box=\lo=\circ,$ where  $\circ$ is any $t$-norm satysfying
the
condition $(a^s\circ b)\ge (a\circ b)^s$ for $s\ge 1$ since $a\circ b\le a\wedge b$ and any $t$-norm is
an
associative and commutative operator $\cite{klement2}$;
\item $\vtr=\Box=\circ=\lo=\cdot$ on $Y=[0,1]$;
\item  $\vtr=\Box=\lo=\cdot$ and $\circ=\wedge$ with  $Y=[0,1]$;
\item $\vtr=\Box=\lo,$ $\circ=\wedge$ and
 $Y=[0,1]$;
\item $\vtr=\Box=\lo=\circ,$ where $\circ$  is any $t$-norm satysfying
the
condition $(a^s\circ b)\ge (a\circ b)^s$ for $s\ge 1$, e.g.  the Dombi $t$-norm
$a\circ b=ab/(a+b-ab);$
\item  $\Box=\lo,$ $\vtr$ is any operator, $a\circ b=a$ for all $a,b\in Y$ and $Y=[0,1]$ or $Y=[0,\infty].$
\end{enumerate}

\begin{example}
The following inequality for
the Shilkret integral of a~non-decreasing function $f$ is valid:
\begin{align*}
\nintl_A f\md\mu \le \frac{1}{\sqrt{K}} \cdot \bigg(\nintl_A  f^2\md\mu\bigg)^{\tfrac{1}{4}} \bigg(\nintl_A x^2f^2\md\mu\bigg)^{\tfrac{1}{4}},
\end{align*}
where $K=\mu(A)\cdot \Big(\nint_A x\md\mu\Big);$
to see this  put $\vtr=\wedge$ or  $\vtr=\cdot$ , $g=1,$ $h=x,$ $\loo=\star=\Box=\c=\cdot,$ $p=q=2,$ $r=s=1$ and  $A=B$ in Theorem $\ref{tw1}.$
\end{example}

\begin{example}
Let  $(X,\cF,\sP)$ be a~probability space. Put   $Y=[0,1],$  $r=s=1,$ $g=1,$ $A=B=X,$ $f=\phi \big(U)$
%,
and $h=1-\psi (U)$, where  $U$ has the~uniform distribution on $[0,1]$ and $\phi,\psi\colon [0,1]\to [0,1]$ are increasing functions. The functions $f$ and $h$ are not comonotone but
\begin{align*}
\sP\big(f\ge a, h\ge b\big)&=\big(\psi ^{-1}(1-b)-\phi^{-1}(a))_+=\sP\big(f\ge a\big)\c_L\sP\big(h\ge b\big),
\end{align*}
so $f$ and $h$ are positively dependent with respect to $\sP$ and $\c _L$.
The conditions $\eqref{warunki}$ are satisfied for  $\vtr=\Box=\loo=\c_L$ and $\star=\circ=\cdot$ (see $\cite{boczek1},$ formula $(40)$),
thus the
corresponding
Carlson inequality takes  the form
 \begin{align}%\label{n5}
\big(N(f)\c_L 1\big)& \cdot \big(N(f) \c_L N(h)\big)\nonumber\le  \big(N(f^p)\big)^{\tfrac{1}{p}}\Big(N\big((f \c_L h)^q\big)\Big)^{\tfrac{1}{q}}
,
%.
\end{align}
where $N(f)=\nintl_X f \md\sP$.
%We observe that if $\varphi_1=\varphi_2,$ then from $\eqref{n5}$ follows that
% $$\Big(\nintl_\Omega X\md\sP\Big) + \Big(\nintl_\Omega (1-X)\md\sP\Big)\le 1,$$
%since $\nintl_\Omega \big(X\,\c_L\, (1-X)\big)^q \md\sP=0$ and the inequality $\eqref{n5}$ is achieved.
\end{example}

\section{Carlson's type inequality for Choquet integral}

In this section, $\mu\colon\cF \rightarrow [0,\infty ]$ is a~monotone measure. Denote by $\cM$ the set of monotone measures on $(X,\cF).$
The Choquet integral of $f\colon X\to [0,\infty)$ on $A\in\cF$ is defined as
\begin{align*}
\cintl_A f\md \mu =\intl_0^\infty \mu\big(A\cap \lbrace  f\ge t\rbrace\big) \md t,
\end{align*}
where  the integral on the right-hand side  is the improper Riemann integral. A~function $f$ is said to be {\it integrable} on a~measurable set $A$ if $\cint _Af\md\mu<\infty.$ The importance of the Choquet integral still increases due to many applications in mathematics and economics, see for instance
$\cite{cerda, den, gra, hei, ka}.$

First, we show that it does not exist  a~functional $c\colon\cM\to [0,\infty]$ such that for any monotone measure $\mu$ and any integrable function $f$
\begin{align}\label{lab1}
\intl_X f\md \mu \le c(\mu)\biggl(\intl_Xgf^2\md \mu\biggr)^{\frac{1}{4}}\biggl(\intl_Xhf^2\md \mu\biggr)^{\frac{1}{4}}
%,
\end{align}
provided $\inf _{x\in X}(g(x)h(x))=0.$
Indeed, put $\mu (A)=1$ for all $A\neq \emptyset,$ $f(x)=1$ for $x=t$ and $f(x)=0$ otherwise, where $t$ is any fixed point of $X.$  Since
$\int _X\psi \md \mu =\sup _{x\in X}\psi (x)$, from $\eqref{lab1}$  we have $1\le c(\mu )(g(t)h(t))^{1/4},$ a~contradiction with
$\inf _{x\in X}(g(x)h(x))=0$ and $c(\mu)<\infty.$ 
%Thus,
Therefore, some extra conditions should be imposed on $f,g,h$ or $\mu$.

Now, we present the  Carlson type inequality for
the Choquet integral of comonotone functions.
\medskip

\begin{tw}\label{tw2} Let  $p,q\ge 1$ and $r,s>0.$ Suppose
$f,g\colon X\to [0,\infty)$ and $f,h \colon X\to [0,\infty)$ are
pairs of comonotone functions. If $f$ is integrable on  $A,$ then
\begin{align}\label{nie1}
\cintl_A f\md\mu \le K\big(\mu (A)\big)^d\bigg(\cintl_A f^p g^p\md\mu\bigg)^{\tfrac{r}{p(r+s)}}\bigg(\cintl_A f^q h^q\md\mu\bigg)^{\tfrac{s}{q(r+s)}},
\end{align}
where $K=\Big(\cint _A g\md\mu\Big)^{-\tfrac{r}{r+s}}\Big(\cint _Ah\md\mu\Big)^{-\tfrac{s}{r+s}}$ and $d=2-\tfrac{1}{r+s}\left(\tfrac{r}{p}+\tfrac{s}{q}\right).$
\end{tw}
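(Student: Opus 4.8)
The plan is to deduce \eqref{nie1} from two standard facts about the Choquet integral: the Chebyshev inequality for comonotone functions and a power-mean (Jensen type) inequality. Once both are available, the argument is just careful bookkeeping of exponents. Throughout I would assume $0<\mu(A)<\infty$ and $\int_A g\md\mu,\int_A h\md\mu>0$; the degenerate cases ($\mu(A)=0$, which makes both sides vanish, $\mu(A)=\infty$ with $d>0$, which makes the right-hand side infinite, and a vanishing denominator inside $K$) are handled separately as trivial or vacuous.

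First I would invoke the Chebyshev inequality for the Choquet integral of comonotone functions: for non-negative comonotone $u,v$ one has $\mu(A)\int_A uv\md\mu\ge\big(\int_A u\md\mu\big)\big(\int_A v\md\mu\big)$. Applying it to the comonotone pair $f,g$ and then to $f,h$ yields the two upper bounds $\int_A f\md\mu\le\mu(A)\big(\int_A g\md\mu\big)^{-1}\int_A fg\md\mu$ and $\int_A f\md\mu\le\mu(A)\big(\int_A h\md\mu\big)^{-1}\int_A fh\md\mu$. Splitting $\int_A f\md\mu=\big(\int_A f\md\mu\big)^{r/(r+s)}\big(\int_A f\md\mu\big)^{s/(r+s)}$ and estimating the first factor by the first bound and the second factor by the second bound gives
\[
\intl_A f\md\mu\le\mu(A)\,K\,\Big(\intl_A fg\md\mu\Big)^{\tfrac{r}{r+s}}\Big(\intl_A fh\md\mu\Big)^{\tfrac{s}{r+s}},
\]
with $K$ exactly the constant from the statement, since the powers of $\mu(A)$ combine to $\tfrac{r}{r+s}+\tfrac{s}{r+s}=1$.

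Next I would pass from $fg$ and $fh$ to their $p$-th and $q$-th powers through a power-mean inequality for the Choquet integral: for $\phi\ge 0$ and $p\ge 1$, $\int_A\phi\md\mu\le\mu(A)^{1-1/p}\big(\int_A\phi^p\md\mu\big)^{1/p}$. I would prove this from the layer-cake representation $\int_A\phi\md\mu=\int_0^\infty G(t)\md t$ with $G(t)=\mu(A\cap\{\phi\ge t\})$: normalizing $S=G/\mu(A)$ to a survival function lets me read the two Choquet integrals as $\mu(A)\,E[\xi]$ and $\mu(A)\,E[\xi^p]$ for a non-negative variable $\xi$, so the scalar power-mean inequality $E[\xi]\le(E[\xi^p])^{1/p}$ for $p\ge 1$ gives the claim. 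Using it with $\phi=fg$ and exponent $p$, and with $\phi=fh$ and exponent $q$, and substituting into the display above, the exponents of $\mu(A)$ total $1+(1-\tfrac1p)\tfrac{r}{r+s}+(1-\tfrac1q)\tfrac{s}{r+s}=2-\tfrac{1}{r+s}\big(\tfrac{r}{p}+\tfrac{s}{q}\big)=d$, while the integral exponents become $\tfrac{r}{p(r+s)}$ and $\tfrac{s}{q(r+s)}$, reproducing \eqref{nie1} exactly.

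The main obstacle is not the exponent arithmetic but securing the two auxiliary inequalities. The power-mean inequality is the easier ingredient and follows cleanly from the distribution-function reduction sketched above. The comonotone Chebyshev inequality is the delicate one: it is the only place where comonotonicity of the pairs $f,g$ and $f,h$ is genuinely used, and it plays the role, for an integral whose sole additivity is comonotone additivity, of the classical inequality for similarly ordered functions. I would therefore state and cite (or prove) it as a separate lemma before running the combination argument.
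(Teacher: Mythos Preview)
Your proof is correct and follows essentially the same route as the paper: both combine the Chebyshev inequality for comonotone functions with the Jensen/power-mean inequality for the Choquet integral, and then track the exponents. The only cosmetic difference is that the paper first normalizes to the probability measure $m(B)=\mu(A\cap B)/\mu(A)$, which lets it use Jensen in the clean form $\bigl(\int\phi\md m\bigr)^{c}\le\int\phi^{c}\md m$ and postpone all $\mu(A)$ bookkeeping to a single substitution at the end; you instead carry the $\mu(A)$ factors throughout and apply the two ingredients in the opposite order, but the argument is the same.
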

\begin{proof} Without loss of generality, we assume that    $0<\mu (A)<\infty$.
 Put $m(B)=\mu (A\cap B)/\mu (A)$ for $B\in \cF.$
For a
%Given
given $c\ge 1$, the
following
Jensen type inequality
\begin{align}\label{nc2}
\bigg(\intl f \md m\bigg)^c\le \intl f^c \md m %,
\end{align}
%holds
is satisfied
$\cite{girotto2, mes, zao}$.  Hereafter,  we write $\int f\md m$ instead of $\cint _Af\md m$.
From  $\eqref{nc2},$ we have
\begin{align}\label{new1}
\bigg(\int fg \md m\bigg)^r\bigg(\int fh \md m\bigg)^s\le \bigg(\int f^p g^p
\md m\bigg)^\frac{r}{p} \bigg(\int f^q h^q \md m\bigg)^\frac{s}{q}.
\end{align}
Since $f,g$ are comonotone functions, the following Chebyshev inequality
\begin{align*}%\label{nuc1}
\int fg\md m \ge \int f\md m \int g\md m
\end{align*}
holds (see $\cite{girotto1}$).
The functions $f, h$ are also comonotone, so from
$\eqref{new1}$ we get
\begin{align*}
\bigg(\int f\md m\bigg)^{r+s}\bigg(\int g\md m\bigg)^r\bigg(\int h \md m\bigg)^s\le  \bigg(\int f^p g^p \md m\biggr)^\frac{r}{p}
\bigg(\int f^q h^q \md m\bigg)^\frac{s}{q}.
\end{align*}
Combining this with the equality
$\int \phi\md m=\big(\mu (A)\big)^{-1}\cint _A \phi \md\mu$,
completes the proof.
\end{proof}

Putting $g=1$ and $r=s$ in Theorem $\ref{tw2},$ we have
\begin{align*}
\bigg(\cintl_A f\md\mu \bigg)^2\le
\frac{\mu (A)^{3-\left(\tfrac{1}{p}+\tfrac{1}{q}\right)}}{\cintl_A h \md \mu}\bigg(\cintl_A f^p\md\mu\bigg)^{\tfrac{1}{p}}\bigg(\cintl_A f^q h^q\md\mu\bigg)^{\tfrac{1}{q}},
\end{align*}
since $\cint_A 1\md\mu =\mu(A)$. This result was obtained by  Ouyang for $p,q>1$  as a~consequence of H\"older's inequality for
the
Choquet integral
of comonotone functions $f,h$ and Chebyshev's inequality
$\cite{Ou1,Ou}.$

The inequality $\eqref{nie1}$ is sharp. In fact, if
$\mu(B)=1$ for $B\neq \emptyset$, then
$\int _A\phi \md\mu=s(\phi)$, where $s(\phi)$ denotes the supremum of $\phi$ on $A,$ so the inequality $\eqref{nie1}$ takes the form
\begin{align}\label{nowu1}
s(f)\le s(g)^{-\frac{r}{r+s}}s(h)^{-\frac{s}{r+s}}\bigl(s(fg)\bigr)^{\frac{r}{r+s}}\bigl(s(fh)\bigr)^{\frac{s}{r+s}}.
\end{align}
Since $s(\phi\psi)=s(\phi)s(\psi)$ for comonotone functions $\phi$, $\psi$ (see $\cite{mu}$),
the equality in $\eqref{nowu1}$ is attained.
%The similar arguments apply for
%the measure $\mu (B)=0$ for $B\neq X.$

Now, we provide  the Carlson type inequality for the Choquet integral with respect to a~submodular monotone measure $\mu.$
%being submodular.
Recall that $\mu$ is  {\it submodular} if
\begin{align*}
\mu(A\cap B)+\mu(A\cup B)\le \mu(A)+\mu(B)
\end{align*}
for $A,B\in\cF.$ The Choquet integral is subadditive for all measurable functions $f,g$ iff $\mu$ is  submodular
(see $\cite{pap1},$ Theorem $7.7$).
Define
\begin{align}\label{wzorH}
H_{pq}(a,b)=(ab)^{\tfrac{1}{p}}\biggl(\cintl_A \frac{1}{(bg+ah)^{q-1}}\md\mu\biggr)^{\tfrac{1}{q}}.
\end{align}

\begin{tw}\label{twierdzenie3}
If $\mu$ is submodular, $f\colon X\to [0,\infty)$ and $A\in \mathcal{F},$ then
\begin{align}\label{nc3x}
\cintl_A f\md\mu\le 2^{1/p}H_{pq}\bigg(\cintl_A gf^p\md\mu,\cintl_A hf^p\md\mu \bigg),
\end{align}
where  $p>1$
and $1/p+1/q=1.$ The equality in $\eqref{nc3x}$ is attained if
$
\biggl(g\intl_Ahf^p\md\mu+h\intl_Agf^p\md\mu\biggr)^qf^p=\gamma
$
for some $\gamma\ge 0$ provided $\mu $ is modular or $gf^p$ and $hf^p$ are comonotone functions.
\end{tw}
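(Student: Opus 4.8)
The plan is to reduce $\eqref{nc3x}$ to a Hölder-type inequality for the Choquet integral combined with the subadditivity furnished by submodularity. Write $a=\cintl_A gf^p\md\mu$ and $b=\cintl_A hf^p\md\mu$, so the right-hand side of $\eqref{nc3x}$ equals $2^{1/p}(ab)^{1/p}\big(\cintl_A(bg+ah)^{-(q-1)}\md\mu\big)^{1/q}$. The decisive idea is to introduce the weight $w=bg+ah$ and split the integrand as $f=\big(f\,w^{1/p}\big)\cdot w^{-1/p}$. The engine of the proof is the Hölder inequality for the submodular Choquet integral: for conjugate exponents $p,q>1$,
\[
\cintl_A uv\md\mu\le\bigg(\cintl_A u^p\md\mu\bigg)^{1/p}\bigg(\cintl_A v^q\md\mu\bigg)^{1/q}.
\]
This can either be quoted from the literature or derived from the representation $\cintl_A\varphi\md\mu=\max_{P}\cintl_A\varphi\md P$ over the core of the submodular $\mu$, by applying the classical Hölder inequality for each $P$ in the core and using $\cintl_A\varphi\md P\le\cintl_A\varphi\md\mu$ for nonnegative $\varphi$.

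Applying the Hölder inequality with $u=f\,w^{1/p}$ and $v=w^{-1/p}$, and using $q/p=q-1$ so that $v^q=w^{-(q-1)}$, gives
\[
\cintl_A f\md\mu\le\bigg(\cintl_A f^p w\md\mu\bigg)^{1/p}\bigg(\cintl_A w^{-(q-1)}\md\mu\bigg)^{1/q}.
\]
Next I would bound the first factor. Since $w=bg+ah$, we have $f^pw=b\,gf^p+a\,hf^p$, so by subadditivity of the Choquet integral (equivalent to submodularity by $\cite{pap1},$ Theorem $7.7$) together with positive homogeneity,
\[
\cintl_A f^p w\md\mu\le b\cintl_A gf^p\md\mu+a\cintl_A hf^p\md\mu=ba+ab=2ab.
\]
Substituting this and recognizing the second factor as precisely $\big(\cintl_A(bg+ah)^{-(q-1)}\md\mu\big)^{1/q}$ yields $\cintl_A f\md\mu\le 2^{1/p}(ab)^{1/p}\big(\cintl_A(bg+ah)^{-(q-1)}\md\mu\big)^{1/q}=2^{1/p}H_{pq}(a,b)$, which is $\eqref{nc3x}$.

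For the equality assertion I would trace back through the two steps. Equality in the Hölder step forces proportionality of $u^p$ and $v^q$, i.e. $f^pw=\gamma\,w^{-(q-1)}$, equivalently $w^qf^p=\gamma$; since $w^qf^p=\big(g\cintl_A hf^p\md\mu+h\cintl_A gf^p\md\mu\big)^qf^p$, this is exactly the stated proportionality condition. Equality in the subadditivity step requires additivity of the Choquet integral for the pair $b\,gf^p$, $a\,hf^p$, which holds when $\mu$ is modular (where the Choquet integral is additive) or when $gf^p$ and $hf^p$, hence their positive multiples, are comonotone. Under either hypothesis the subadditivity inequality becomes an equality, and under modularity the Hölder inequality reduces to the classical one whose equality case is the proportionality above; combining the two gives the attainment claim.

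The main obstacle is twofold. First, establishing (or correctly invoking) the Hölder inequality for the submodular Choquet integral and handling the admissibility of the weight $w=bg+ah$: one must ensure that $a,b$ are finite and positive and that $w>0$ $\mu$-almost everywhere, so that the factorization $f=(f\,w^{1/p})\cdot w^{-1/p}$ and the negative power $w^{-(q-1)}$ are meaningful; the degenerate cases $a=0$, $b=0$, or $w$ vanishing must be disposed of separately. Second, in the equality analysis one must verify that the Hölder-equality (proportionality) and the additivity condition (modularity or comonotonicity of $gf^p,hf^p$) are simultaneously compatible, so that equality is genuinely attained and not merely necessary; this is transparent under modularity but requires care in the comonotone case.
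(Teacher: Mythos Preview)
Your proof is correct and follows essentially the same route as the paper: split $f=(bg+ah)^{1/p}f\cdot (bg+ah)^{-1/p}$, apply H\"older for the submodular Choquet integral, then use subadditivity and positive homogeneity to bound $\cintl_A(bg+ah)f^p\md\mu\le 2ab$ with the choice $a=\cintl_A gf^p\md\mu$, $b=\cintl_A hf^p\md\mu$. The only cosmetic difference is that the paper keeps $a,b$ as free parameters throughout and substitutes at the end, whereas you fix them from the outset; your equality discussion is slightly more detailed than the paper's but matches its conclusions.
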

\begin{proof}
Since $\mu$ is submodular,
the following H\"older inequality
\begin{align}\label{nc2a}
\cintl_A\phi \psi \md\mu \le \bigg(\cintl_A\phi^p\md\mu \bigg)^{\tfrac{1}{p}}\bigg(\cintl_A\psi^q\md\mu \bigg)^{\tfrac{1}{q}}
\end{align}
is valid,
where $\phi,\psi\ge 0$ (see $\cite{rwang1},$ Theorem $3.5$). The equality in $\eqref{nc2a}$
holds if $\alpha \phi^p=\beta \psi^q$  for $\alpha,\beta\ge 0$, $\alpha+\beta >0$ $\cite{nikulescu}.$
By
%Applying
$\eqref{nc2a}$ and the
% as well as
subadditivity and positively homogeneity of the Choquet integral,
% and positively homogeneity,
we get
\begin{align}\label{additional}
\cintl_A f\md\mu&=\cintl_A (bg+ah)^{1/p}f\frac{1}{(bg+ah)^{1/p}}\md\mu\nonumber\\
&\le \bigg(\cintl_A (bg+ah) f^p\md\mu\bigg)^{\tfrac{1}{p}}\bigg(\cintl_A \frac{1}{(bg+ah)^{q/p}}\md\mu\bigg)^{\tfrac{1}{q}}\nonumber\\
&\le \bigg(b\cintl_A gf^p\md\mu+a\cintl_A hf^p\md\mu\bigg)^{\tfrac{1}{p}}\bigg(\cintl_A \frac{1}{(bg+ah)^{q-1}}\md\mu\bigg)^{\tfrac{1}{q}}.
\end{align}
Putting $a=\cint_A gf^p\md\mu$ and $b=\cint_A hf^p\md\mu,$
we obtain $\eqref{nc3x}.$ Note that if $\mu$ is modular then from Theorem $7.7$ of
$\cite{pap1}$ it follows that
\begin{equation*}
\cintl_A (bgf^p+ahf^p)\md\mu=\cintl_A bgf^p\md\mu+\cintl_A ahf^p\md\mu.\qedhere
\end{equation*}

\end{proof}

If $\mu$ is the Lebesgue measure, $g(x)=1$, $h(x)=x^2$, $p=q=2$ and $A=[0,\infty],$ we obtain the classical Carlson inequality  $\eqref{c1}.$
%Observe also that
%\begin{align*}
%H_{pq}(a,b)\le \frac{1}{a\wedge b} \bigg(\cintl_A\frac{1}{(g+h)^{q-1}}\md\mu\bigg).
%\end{align*}
%so from $\eqref{nc3}$ it follows that
%\begin{align*}
%\cintl_A f\md\mu\le 2^{1/p}\biggl(\cintl_A gf^p\md\mu\vee \cintl_A hf^p\md\mu\biggr)^{1/p}\biggl(\cint_A \frac{1}{(g+h)^{q-1}}\md\mu\biggr)^{1/q}.
%\end{align*}

Next, we present the Carlson type inequality for the Choquet integral with respect to a~subadditive monotone measure $\mu.$

\begin{tw}
Suppose $f\colon X\to [0,\infty)$, $A\subset [0,\infty]$ and $\mu\in\cM$ such that $\mu(A\cup B)\le \mu(A)+\mu(B)$ for $A,B\in\cF.$  Then
\begin{align*}\label{nc3}
\cintl_A f\md\mu\le 4^{1/p} \biggl(\frac{1}{\sqrt{p}}+\frac{1}{\sqrt{q}}\biggr)^2 H_{pq}\bigg(\cintl_A gf^p\md\mu,\cintl_A hf^p\md\mu \bigg),
\end{align*}
where $H_{pq}(a,b)$ is given by $\eqref{wzorH},$  $p>1$
and $1/p+1/q=1.$
\end{tw}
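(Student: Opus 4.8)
The plan is to run the argument of Theorem~\ref{twierdzenie3} once more, but to replace its two submodularity-dependent ingredients---the exact subadditivity of the Choquet integral and the H\"older inequality \eqref{nc2a} with constant $1$---by quantitative versions valid under the weaker hypothesis $\mu(A\cup B)\le\mu(A)+\mu(B)$. Both of these versions spring from a single elementary estimate on level sets, and the extra constants they carry are exactly what turn the factor $2^{1/p}$ of \eqref{nc3x} into $4^{1/p}$ and insert the H\"older factor $(1/\sqrt p+1/\sqrt q)^2$.

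The engine is a parametrized subadditivity estimate: for non-negative $\phi,\psi$ and any $\alpha\in(0,1)$ one has the inclusion $\{\phi+\psi\ge t\}\subset\{\phi\ge\alpha t\}\cup\{\psi\ge(1-\alpha)t\}$, so monotonicity and subadditivity of $\mu$ give $\mu(A\cap\{\phi+\psi\ge t\})\le\mu(A\cap\{\phi\ge\alpha t\})+\mu(A\cap\{\psi\ge(1-\alpha)t\})$; integrating in $t$ over $(0,\infty)$ and rescaling yields
\begin{align*}
\cintl_A(\phi+\psi)\md\mu\le\frac{1}{\alpha}\cintl_A\phi\md\mu+\frac{1}{1-\alpha}\cintl_A\psi\md\mu.
\end{align*}

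From this one estimate I would extract the two tools I need. Taking $\alpha=\tfrac12$ gives the factor-$2$ subadditivity $\cintl_A(\phi+\psi)\md\mu\le 2\big(\cintl_A\phi\md\mu+\cintl_A\psi\md\mu\big)$. For H\"older, I would normalize by $A=\cintl_A\phi^p\md\mu$ and $B=\cintl_A\psi^q\md\mu$, use Young's inequality $\phi\psi/(A^{1/p}B^{1/q})\le\phi^p/(pA)+\psi^q/(qB)$, integrate, and apply the parametrized estimate together with positive homogeneity to obtain
\begin{align*}
\frac{1}{A^{1/p}B^{1/q}}\cintl_A\phi\psi\md\mu\le\frac{1}{\alpha p}+\frac{1}{(1-\alpha)q}.
\end{align*}
Minimizing the right-hand side at $\alpha=\sqrt q/(\sqrt p+\sqrt q)$ gives the value $(1/\sqrt p+1/\sqrt q)^2$, hence the subadditive H\"older inequality with precisely that constant. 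This optimization is the one genuinely new computation and, modest though it is, is the step I expect to matter most; it is also the point at which one could instead simply cite a subadditive H\"older inequality from the literature, paralleling the citation of \eqref{nc2a} to \cite{rwang1}.

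With both tools in hand the conclusion follows exactly as in Theorem~\ref{twierdzenie3}. Writing $f=(bg+ah)^{1/p}f\cdot(bg+ah)^{-1/p}$ as in \eqref{additional} and applying the subadditive H\"older inequality gives
\begin{align*}
\cintl_A f\md\mu\le\bigg(\frac{1}{\sqrt p}+\frac{1}{\sqrt q}\bigg)^2\bigg(\cintl_A(bg+ah)f^p\md\mu\bigg)^{1/p}\bigg(\cintl_A\frac{1}{(bg+ah)^{q-1}}\md\mu\bigg)^{1/q}.
\end{align*}
The factor-$2$ subadditivity and positive homogeneity bound the first factor by $2\big(b\cintl_A gf^p\md\mu+a\cintl_A hf^p\md\mu\big)$, and the substitution $a=\cintl_A gf^p\md\mu$, $b=\cintl_A hf^p\md\mu$ collapses this to $4ab$, so the first factor equals $4^{1/p}(ab)^{1/p}$. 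Recalling the definition \eqref{wzorH} of $H_{pq}$, this is exactly the asserted bound $4^{1/p}(1/\sqrt p+1/\sqrt q)^2\,H_{pq}\big(\cintl_A gf^p\md\mu,\cintl_A hf^p\md\mu\big)$.
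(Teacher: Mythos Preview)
Your proof is correct and follows exactly the paper's route: replace the two submodularity-dependent ingredients in the chain \eqref{additional} by the factor-$2$ subadditivity and the H\"older inequality with constant $(1/\sqrt p+1/\sqrt q)^2$, then substitute $a,b$ as before. The only difference is that the paper simply cites both auxiliary inequalities from Shirali~\cite{Shi}, whereas you derive them from scratch via the parametrized level-set estimate and the optimization in $\alpha$---a self-contained addition, but not a different strategy.
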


\begin{proof} The proof is similar to that of Theorem $\ref{twierdzenie3},$
but
we use the following inequalities
%from
(see $\cite{Shi}$)
\begin{align*}
\cintl_Afg\md\mu \le \biggl(\frac{1}{\sqrt{p}}+\frac{1}{\sqrt{q}}\biggr)^2\bigg(\cintl_Af^p\md\mu \bigg)^{\tfrac{1}{p}}\bigg(\cintl_Ag^q\md\mu \bigg)^{\tfrac{1}{q}},
\end{align*}
\begin{align*}
\cintl_A(f+g)\md\mu \le 2\biggl(\cintl_Af\md\mu+\cintl_Ag\md\mu\biggr).
\end{align*}
instead of those in $\eqref{additional}.$
%Observe that
Since $(1/\sqrt{p}+1/\sqrt{q})^2\le 2,$
%so
the bounds obtained are better than the bounds of $\cite{cerda}$.
\end{proof}

%\section{Conclusions}
%We  present some Carlson type
%inequalities for seminormed Sugeno integrals in the case of functions belonging to an essentially wider class than the comonotone functions. We also showed Carlson's type inequality for the Choquet integral.

%\section{Acknowledgments}
%The authors would like to thank the referees for their valuable comments which led to improvements in the paper.

%This paper has been partially supported by the grant for young researchers from Łódź University of Technology, grant number $501\backslash 17$-2-2-$7154\backslash 2014.$

\addcontentsline{toc}{chapter}{Literatura}

%{\small MICHAŁ BOCZEK}

%{\small INSTITUTE OF MATHEMATICS}

%{\small LODZ UNIVERSITY OF TECHNOLOGY}

%{\small UL. W\'{O}LCZA\'{N}SKA 215}

%{\small 90-924 LODZ}

%{\small POLAND}

%{\small E-MAIL: 800401@edu.p.lodz.pl}
%\medskip

%{\small MAREK KALUSZKA}

%{\small INSTITUTE OF MATHEMATICS}

%{\small LODZ UNIVERSITY OF TECHNOLOGY}

%{\small UL. W\'{O}LCZA\'{N}SKA 215}

%{\small 90-924 LODZ}

%{\small POLAND}

%{\small E-MAIL: MAREK.KALUSZKA@P.LODZ.PL}
%{\small \bigskip }

\end{document}